\newtheorem{theorem}{Theorem}
\newtheorem{proposition}[theorem]{Proposition}
\newtheorem{lemma}[theorem]{Lemma}
\newtheorem{corollary}[theorem]{Corollary}
\newtheorem*{question}{Question}
\newcommand{\s}{\mathfrak{s}}
\renewcommand{\t}{\mathfrak{t}}
\newcommand{\Z}{\mathbb{Z}}
\newcommand{\Q}{\mathbb{Q}}
\newcommand{\defeq}{\vcentcolon=}
\def\co{\colon\thinspace}
\newtheorem*{rep@theorem}{\rep@title}
\newcommand{\newreptheorem}[2]{%
\newenvironment{rep#1}[1]{%
 \def\rep@title{#2 \ref{##1}}%
 \begin{rep@theorem}}%
 {\end{rep@theorem}}}
\begin{document}
\makeatletter
\providecommand\@dotsep{5}
\makeatother
\rhead{\thepage}
\lhead{\author}
\thispagestyle{empty}


\raggedbottom
\pagenumbering{arabic}
\setcounter{section}{0}



\title{Lattices and correction terms}
\author{Kyle Larson}
\address{Alfr\'ed R\'enyi Institute of Mathematics, Budapest, Hungary}
\email{larson@renyi.mta.hu}

\begin{abstract}
Let $L$ be a nonunimodular definite lattice, $L^*$ its dual lattice, and $\lambda$ the discriminant form on $L^*/L$.
Using a theorem of Elkies we show that whether $L$ embeds in the standard definite lattice of the same rank is completely determined by a collection of lattice correction terms, one for each metabolizing subgroup of $(L^*/L, \lambda)$.
As a topological application this gives a rephrasing of the obstruction for a rational homology 3--sphere to bound a rational homology 4--ball coming from Donaldson's theorem on definite intersection forms of 4--manifolds. 
Furthermore, from this perspective it is easy to see that if the obstruction to bounding a rational homology ball coming from Heegaard Floer correction terms vanishes, then (under some mild hypotheses) the obstruction from Donaldson's theorem vanishes too.

\end{abstract}
\maketitle


\begin{section}{Introduction}\label{introduction}
In \cite{Elkies} Elkies showed that every unimodular positive definite lattice $L$ of rank $n$ contains characteristic vectors with square less than or equal to $n$, and if there are no characteristic vectors with square strictly less than $n$ then $L$ is isomorphic to the standard lattice $(\Z^n, I)$.
One can define a \emph{lattice correction term} 
\begin{equation}\label{definition}
d_L = \min \Big\{\frac{\chi^2-n}{4}\Big\},
\end{equation}
where the minimum is over all characteristic vectors $\chi \in L$ (see \cite{Greene2}).
This is well-defined for all positive definite unimodular lattices, and Elkies' result translates to the statement that $d_L \leq 0$ and $d_L = 0$ if and only if $L$ is isomorphic to the standard lattice.
Our first goal is to generalize this to the case where $L$ is not unimodular.
In this setting one can ask whether a definite lattice \emph{embeds} in the standard lattice of the same rank.

Recall that we have a sequence $0 \rightarrow L \rightarrow L^* \xrightarrow{\pi} L^*/L \rightarrow 0$, and $L$ is nonunimodular if and only if the discriminant group $L^*/L$ is non-trivial.
There is a one-to-one correspondence between metabolizers $M < L^*/L$ and unimodular lattices $U$ with $L\subset U \subset L^*$, given by $U \defeq \pi^{-1}(M)$ (Proposition \ref{one to one}).
(Recall that a metabolizer is a subgroup $M$ with $|L^*/L|=|M|^2$ and such that the discriminant form 
$\lambda$ is identically zero on $M$.)
For a metabolizer $M$ we denote the corresponding unimodular lattice $U(M)$.
Then $U(M)$ will necessarily be positive definite of rank $n$, and hence we have a lattice correction term $d_{U(M)}$.
We derive the following as a corollary of Elkies' theorem.

\begin{theorem}\label{lattice theorem}
For $L$ a positive definite lattice of rank $n$, consider the set $D \defeq \{d_{U(M_i)}\}$ of lattice correction terms, where we range over all metabolizers $M_i < L^*/L$. Then $L$ embeds in the standard lattice of rank $n$ if and only if $D$ contains 0.
\end{theorem}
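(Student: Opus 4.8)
The plan is to deduce both directions of the equivalence from Elkies' theorem together with Proposition \ref{one to one}. The conceptual reformulation is this: asking whether $L$ embeds in the standard lattice of rank $n$ is the same as asking whether some unimodular lattice $U$ with $L \subseteq U \subseteq L^*$ is isomorphic to $(\Z^n, I)$, and by Proposition \ref{one to one} these intermediate unimodular lattices are precisely the $U(M)$ as $M$ ranges over the metabolizers of $(L^*/L,\lambda)$. Since Elkies' theorem says a positive definite unimodular rank-$n$ lattice is standard exactly when its lattice correction term is $0$, the theorem will follow once this reformulation is in place.

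For the implication $0 \in D \Rightarrow L$ embeds, I would suppose $d_{U(M)} = 0$ for some metabolizer $M$. As noted in the discussion preceding the statement, $U(M)$ is positive definite unimodular of rank $n$, so Elkies' theorem gives $U(M) \cong (\Z^n,I)$; composing this isomorphism with the inclusion $L \subseteq U(M)$ exhibits $L$ as a sublattice of the standard lattice of rank $n$.

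Conversely, suppose there is an isometric embedding of $L$ into $(\Z^n,I)$, and identify $L$ with its image. Because $L$ has rank $n$, this image has finite index in $\Z^n$, so $L \otimes \Q = \Q^n$ and the standard inner product on $\Q^n$ restricts to the form on $L$. Since the standard form is $\Z$-valued on $\Z^n$, every vector of $\Z^n$ pairs integrally with every vector of $L$; that is, $\Z^n \subseteq L^*$. Thus $L \subseteq \Z^n \subseteq L^*$ with $\Z^n$ unimodular, so by Proposition \ref{one to one} we have $\Z^n = U(M)$ for the metabolizer $M = \Z^n/L \subseteq L^*/L$. Elkies' theorem then gives $d_{U(M)} = d_{(\Z^n,I)} = 0$, so $0 \in D$.

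I do not anticipate a serious obstacle: the substance of the argument is carried entirely by Elkies' theorem and Proposition \ref{one to one}, and what remains is bookkeeping about inclusions and dual lattices. The only points genuinely needing care are the integrality check showing $\Z^n \subseteq L^*$ in the second implication, and confirming that Proposition \ref{one to one} applies to this overlattice — equivalently, that $\Z^n/L$ really is a metabolizer of the discriminant form and not merely a subgroup of the right order.
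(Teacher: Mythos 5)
Your proposal is correct and follows essentially the same route as the paper: one direction applies Elkies' theorem to a $U(M)$ with $d_{U(M)}=0$, and the other identifies the image $\Z^n$ as an intermediate unimodular lattice between $L$ and $L^*$ and invokes Proposition \ref{one to one} to recognize it as some $U(M)$. Your extra care in checking $\Z^n \subseteq L^*$ just fills in a step the paper summarizes as ``tensoring with $\Q$.''
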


Note that $D$ is a finite set since $L^*/L$ is a finite group, and $D$ is empty (and hence $L$ does not embed in the standard lattice) if there do not exist any metabolizers.
Our main interest in this result is in application to the following question in low-dimensional topology:

\begin{question}[\cite{Kirby}, Problem 4.5]
When does a rational homology 3--sphere bound a rational homology 4--ball?
\end{question}

We are interested in the relationship between two obstructions to a rational homology 3--sphere $Y$ smoothly bounding a rational homology 4--ball.
Suppose $Y$ bounds a smooth positive definite 4--manifold $X$, and we will make the simplifying assumption that $H_1(X)=0$. 
If $Y$ bounds a smooth rational homology ball $W$ as well, we can 
form a smooth, closed, definite 4--manifold $Z = X \cup_{Y} -W$.
By Donaldson's theorem \cite{Donaldson, Donaldson2} on definite intersection forms of smooth, closed 4--manifolds, the lattice $(H_2(Z),Q_Z)$ must be isomorphic to the standard lattice.
It then follows that the lattice $(H_2(X),Q_X)$ must embed in the standard lattice of the same rank.
This is what we call \emph{the obstruction to $Y$ bounding a rational homology ball coming from Donaldson's theorem}.

By Theorem \ref{lattice theorem}, the obstruction coming from Donaldson's theorem is completely determined by a collection of lattice correction terms, one for each metabolizing subgroup of $(H_1(Y),\lambda)$ (in this context $\lambda$ is known as the linking form).
Then work of Ozsv{\'a}th and Szab{\'o} \cite{OSz} shows that the Heegaard Floer corrections terms of $Y$ put bounds on the values of these lattice correction terms (indeed, the definition (\ref{definition}) of lattice correction terms is motivated by properties of Heegaard Floer correction terms, see Section \ref{QHS}).
We use these bounds to show that the vanishing of the Heegaard Floer correction terms $d(Y,\t)$ on a metabolizing subgroup (in fact a slightly weaker condition) implies that $(H_2(X),Q_X)$ embeds in the standard lattice of the same rank. 

\begin{theorem}\label{correction terms theorem}
Suppose $Y$ is a rational homology 3--sphere such that $d(Y,\t) \geq 0$ for all spin$^c$ structures $\t$ with $\textrm{PD}(c_1(\t))$ belonging to some fixed metabolizer $M$ of $H_1(Y)$.
Then if $Y$ bounds a smooth positive definite 4--manifold $X$ with $H_1(X)=0$, the lattice $(H_2(X),Q_X)$ must embed in the standard lattice of the same rank.
\end{theorem}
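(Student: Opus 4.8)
The plan is to deduce this from Theorem~\ref{lattice theorem} by showing that the lattice correction term attached to the metabolizer $M$ vanishes. Since $H_1(X)=0$, the group $H_2(X)$ is free, and the intersection form makes $L\defeq(H_2(X),Q_X)$ a positive definite lattice of rank $n=b_2(X)$. Poincar\'e--Lefschetz duality together with the long exact sequence of the pair $(X,Y)$ identifies $H^2(X;\Z)$ with $L^*$ and produces a short exact sequence $0\to L\to L^*\xrightarrow{\pi}H_1(Y)\to 0$ carrying the discriminant form $\lambda$ to the linking form of $Y$ (up to an overall sign, irrelevant for the metabolizer condition). In particular $M$ is a metabolizer of $(L^*/L,\lambda)$, so by Proposition~\ref{one to one} we obtain the unimodular positive definite lattice $U\defeq U(M)=\pi^{-1}(M)$ of rank $n$, together with its lattice correction term $d_U\in D$. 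By Theorem~\ref{lattice theorem} it is enough to prove that $d_U=0$.

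Next I would use the standard dictionary between characteristic vectors and $\Spinc$ structures. Because $H_1(X)=0$, the assignment $\s\mapsto c_1(\s)$ identifies $\Spinc(X)$ with the set of characteristic vectors of $L$ sitting inside $L^*=H^2(X;\Z)$, and $\pi$ intertwines restriction to the boundary with the map $H^2(X;\Z)\to H^2(Y;\Z)\cong H_1(Y)$; hence $c_1(\s)\in U$ if and only if $\mathrm{PD}(c_1(\s|_Y))\in M$. Since $L\subseteq U$, the condition for a vector to be characteristic for the unimodular lattice $U$ immediately implies it is characteristic for $L$; thus every characteristic vector of $U$ is a characteristic vector of $L$ that lies in $U$, so it equals $c_1(\s)$ for some $\s\in\Spinc(X)$ with $\mathrm{PD}(c_1(\s|_Y))\in M$, and the hypothesis gives $d(Y,\s|_Y)\geq 0$ for every such $\s$.

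Finally I would invoke the inequality of Ozsv\'ath and Szab\'o \cite{OSz} bounding correction terms by intersection forms of bounding definite $4$--manifolds. Applying it to the negative definite manifold $-X$, with boundary $-Y$, and using $d(-Y,\cdot)=-d(Y,\cdot)$ together with the change of sign of the intersection form under orientation reversal, one obtains for each $\s\in\Spinc(X)$ the bound
\[
d(Y,\s|_Y)\ \leq\ \frac{c_1(\s)^2-n}{4},
\]
where $c_1(\s)^2$ is computed with the positive definite form on $L^*\otimes\Q$. Letting $c_1(\s)$ range over the characteristic vectors of $U$ and combining with the previous paragraph, each quantity $\tfrac14\bigl(c_1(\s)^2-n\bigr)$ is nonnegative, so $d_U\geq 0$; since Elkies' theorem gives $d_U\leq 0$ (as $U$ is unimodular positive definite of rank $n$), we conclude $d_U=0$, and Theorem~\ref{lattice theorem} completes the proof. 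I expect the only genuine obstacle to be bookkeeping---matching the algebraic data ($L^*$, $\pi$, characteristic vectors, $\lambda$) with the topological data ($H^2(X;\Z)$, boundary restriction, $c_1$ of $\Spinc$ structures, the linking form) and tracking the orientation-reversal signs when quoting \cite{OSz}; no input beyond Theorem~\ref{lattice theorem}, Elkies' theorem, and the Ozsv\'ath--Szab\'o inequality should be needed.
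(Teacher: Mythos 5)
Your proposal is correct and follows essentially the same route as the paper: identify $U(M)$ via Proposition~\ref{one to one}, observe that $\textrm{Char}(U(M))$ sits inside the characteristic covectors of $L$ projecting into $M$, bound each $\tfrac14(\chi^2-n)$ from below by a correction term $d(Y,\t)\geq 0$ via the Ozsv\'ath--Szab\'o inequality, and squeeze against Elkies' bound $d_{U(M)}\leq 0$ to get $d_{U(M)}=0$, whence Theorem~\ref{lattice theorem} applies. The only cosmetic difference is that the paper first passes through Proposition~\ref{bounds definite} to note that the minimum of the relevant $d(Y,\t)$ is exactly $0$, whereas you run the same chain of inequalities directly.
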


Furthermore, if the correction terms $d(Y,\t)$ corresponding to a metabolizer $M$ as above are all strictly positive, then $Y$ cannot bound a positive definite 4--manifold $X$ with $H_1(X)=0$ (Proposition \ref{bounds definite}).
Recall that if $Y$ bounds a rational homology ball, then $d(Y,\t) = 0$ for all spin$^c$ structures $\t$ that extend over the rational homology ball.
This can be interpreted in a convenient way if we further assume that $Y$ is a $\Z/2\Z$-homology sphere (so $|H_1(Y)|$ is odd).
In particular, if such a $Y$ bounds a rational homology ball then there exists a metabolizer $M$ such that $d(Y,\t) = 0$ for all spin$^c$ structures $\t$ with $\textrm{PD}(c_1(\t)) \in M$ (see, for example, \cite{HLR}).
This is what we call \emph{the obstruction to a $\Z/2\Z$-homology sphere $Y$ bounding a rational homology ball coming from correction terms},
and hence Theorem \ref{correction terms theorem} shows that (in this context) the obstruction to bounding a rational homology ball coming from Heegaard Floer correction terms is always at least as strong as that coming from Donaldson's theorem.
Note that if $d(Y,\t) = 0$ for all spin$^c$ structures $\t$ with $\textrm{PD}(c_1(\t)) \in M$, then $-Y$ also satisfies the conditions of Theorem \ref{correction terms theorem} (since $d(-Y,\t)=-d(Y,\t)$), and so by reversing orientation we get a statement that also applies to \emph{negative} definite fillings of $Y$.

\begin{corollary}\label{stronger}
If $Y$ is a $\Z/2\Z$-homology sphere on which the correction term obstruction to bounding a rational homology ball vanishes, then for any smooth (positive or negative) definite 4--manifold $X$ with $H_1(X)=0$ and $\partial X = Y$, the lattice $(H_2(X),Q_X)$ must embed in the standard lattice of the same rank.
\end{corollary}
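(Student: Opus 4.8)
The plan is to reduce Corollary \ref{stronger} to two applications of Theorem \ref{correction terms theorem}, one for $Y$ and one for $-Y$; the key observation is that the correction-term hypothesis is symmetric under reversing the orientation of $Y$. First I would unwind the hypothesis: since $Y$ is a $\Z/2\Z$-homology sphere on which the correction term obstruction to bounding a rational homology ball vanishes, by definition (as recalled just before the statement, cf.\ \cite{HLR}) there is a metabolizer $M$ of $H_1(Y)$ with $d(Y,\t)=0$ for every spin$^c$ structure $\t$ having $\mathrm{PD}(c_1(\t))\in M$; in particular $d(Y,\t)\geq 0$ on $M$.

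If $X$ is positive definite, then $Y$ and the metabolizer $M$ satisfy the hypotheses of Theorem \ref{correction terms theorem} verbatim, so $(H_2(X),Q_X)$ embeds in the standard positive definite lattice of rank $\operatorname{rk}H_2(X)$, as desired. If instead $X$ is negative definite, I would apply the same argument to $-X$ and $-Y$: the manifold $-X$ is positive definite with $H_1(-X)=0$ and $\partial(-X)=-Y$, so it suffices to check that $-Y$ also meets the hypothesis of Theorem \ref{correction terms theorem}. This is where the (easy) bookkeeping happens: $H_1(-Y)=H_1(Y)$ and the linking form of $-Y$ is $-\lambda$, but a subgroup is metabolizing for $-\lambda$ precisely when it is for $\lambda$, so $M$ is still a metabolizer; and under the canonical identification $\Spinc(-Y)\cong\Spinc(Y)$ (which negates $c_1$) we have $d(-Y,\t)=-d(Y,\t)=0$, so the relevant correction terms still vanish and the Poincar\'e duals of their $c_1$'s still lie in $M$. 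Theorem \ref{correction terms theorem} then gives that $(H_2(-X),Q_{-X})=(H_2(X),-Q_X)$ embeds in the standard positive definite lattice, equivalently $(H_2(X),Q_X)$ embeds in the standard negative definite lattice of the same rank.

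I do not expect a real obstacle: the only thing to verify is the orientation-reversal bookkeeping in the negative definite case, which is exactly the remark $d(-Y,\t)=-d(Y,\t)$ made just before the statement, and everything else is a direct invocation of Theorem \ref{correction terms theorem}.
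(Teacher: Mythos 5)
Your proposal is correct and matches the paper's own (implicit) argument exactly: the paper derives Corollary \ref{stronger} from Theorem \ref{correction terms theorem} applied to $Y$ in the positive definite case and to $-Y$ (using $d(-Y,\t)=-d(Y,\t)$ and the fact that a metabolizer for $\lambda$ is one for $-\lambda$) in the negative definite case. Your orientation-reversal bookkeeping is slightly more explicit than the paper's one-line remark, but the route is the same.
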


When $Y$ is not a $\Z/2\Z$-homology sphere, the first Chern class mapping is no longer a bijection, and Theorem \ref{correction terms theorem} is less useful. For example, the lens space $L(4,1)$ (which does bound a rational homology ball) does not satisfy the hypotheses of Theorem \ref{correction terms theorem}, since $\textrm{PD}(c_1(\t))$ belongs to the unique metabolizer for \emph{each} spin$^c$ structure on $L(4,1)$, and there exists a spin$^c$ structure whose corresponding correction term is negative.

That there is a close relationship between Donaldson's theorem 
and the correction terms of Heegaard Floer homology was already established in the original paper defining correction terms \cite{OSz}.
Indeed, using the theorem of Elkies mentioned above, Ozsv{\'a}th and Szab{\'o} gave a new proof of Donaldson's theorem using properties of the unique correction term $d(N)$ for an integral homology sphere $N$ (this mirrored another proof in Seiberg-Witten Floer theory \cite{Froyshov}).
Furthermore, they showed that the obstruction to an \emph{integral} homology 3--sphere bounding an \emph{integral} homology 4--ball coming from correction terms is at least as strong as that coming from Donaldson's theorem.
More precisely, suppose $N$ bounds a positive definite 4--manifold $X$.
Then if $d(N)=0$ (which must be the case if $N$ bounds an integral homology ball), $Q_X$ must be isomorphic to the standard form \cite{OSz} (note that Corollary \ref{stronger} is a generalization of this statement).

If we are dealing with a rational rather than integral homology sphere, the two obstructions are slightly more complicated, as we described above. 
The extra complication in the case of Donaldson's theorem is because we have to consider embeddings, rather than isomorphisms, of lattices; in the case of correction terms it is because there is no longer a unique correction term, but rather a collection of correction terms corresponding to the set of spin$^c$ structures on the 3--manifold.

Nonetheless, relations between these two obstructions for rational homology spheres have appeared previously in the literature, usually in more specific contexts.
In \cite{GJ} Greene and Jabuka showed that in the application of these obstructions to showing that certain types of knots (e.g. alternating knots) are not slice, one can view the correction term obstruction as a second-order obstruction after the vanishing of the obstruction coming from Donaldson's theorem (see \cite{GJ} Theorem 3.6 and the preceding exposition).
More recently, Greene \cite{Greene} showed that in certain special cases these two obstructions can be used to achieve the same purpose.
For example, he showed that either obstruction is sufficient to classify which lens spaces $L(p,q)$ with odd $p$ bound rational homology balls (which had been carried out by Lisca \cite{Lisca} using the obstruction from Donaldson's theorem, including those with even $p$).
Indeed, the proof of Theorem \ref{correction terms theorem} is very similar to the ideas presented in \cite[Proposition 2.1]{Greene}. 
In particular, one direction of Greene's argument gives Corollary \ref{stronger} when $H_1(Y)$ is \emph{cyclic}.
Hence the present note can be thought of as a companion to that paper, where here we take a more general and elementary perspective.

\subsection*{Acknowledgements} The author is pleased to thank Marco Golla, Joshua Greene, Brendan Owens, and Sa\v{s}o Strle for helpful conversations and comments. In addition, we thank Marco Golla for sharing Proposition \ref{one to one} with the author, and a referee for helpful suggestions. This research was partially supported by NKFIH Grant K112735.

\end{section}


\begin{section}{lattices}\label{lattices}

First we develop the necessary terminology about lattices (cf. \cite[Section 2]{Greene2}).
In this paper a \emph{lattice} $(L,Q)$ is a finite rank free abelian group $L$ together with a symmetric, bilinear form $Q \co L \times L \rightarrow \Q$.
We will assume that $Q$ is \emph{nondegenerate}, i.e., for every non-zero $x\in L$ there exists some $y \in L$ such that $Q(x,y)\neq 0$.
Usually the form will be understood and we will just say $L$ is a lattice.
If the image of the form lies in $\Z$, then the lattice will be called \emph{integral}.
We will always use $L$ to denote an integral lattice.
An \emph{isomorphism} of lattices is an isomorphism of the free abelian groups that preserves the forms, and an \emph{embedding} of lattices is a monomorphism that preserves the forms.

We say $L$ is \emph{positive definite} if the rank of $Q$ equals its signature, and \emph{negative definite} if the rank of $Q$ equals $-1$ times the signature.
The standard positive definite lattice, or more simply, the \emph{standard lattice} (of rank $n$), is $(\Z^n, I)$.
This means that in a chosen basis the form is represented by the identity matrix.

The form $Q$ extends to a rational valued form on $L \otimes \Q$, and the \emph{dual lattice} $L^*$ is defined as the subset $L^* = \{x \in L \otimes \Q \mid Q(x,y) \in \Z, \forall y\in L\}$.
The quotient $L^*/L$ is called the \emph{discriminant group}, and its order is the  \emph{discriminant} of $L$, denoted disc$(L)$.
If disc$(L) = 1$, then we say $L$ is \emph{unimodular}.
Note that we have a sequence $0 \rightarrow L \rightarrow L^* \xrightarrow{\pi} L^*/L \rightarrow 0$.
We can define a symmetric, bilinear form $\lambda \co  (L^*/L) \times  (L^*/L) \rightarrow \Q/\Z$, called the \emph{discriminant form}, as follows.
For any $x,y \in  L^*/L$, take lifts $\bar{x},\bar{y}\in L^*$ (so $\pi(\bar{x})=x$ and $\pi(\bar{y})=y$), and define $\lambda(x,y) = -Q(\bar{x},\bar{y}) \pmod{1}$.
As mentioned in the introduction, a subgroup $M < L^*/L$ satisfying disc$(L) = |M|^2$ and $\lambda|_{M\times M} \equiv 0$ is called a \emph{metabolizer}.
The following proposition is well-known in various forms (see \cite[Lemma 2.5]{Jabuka}), and is central to our argument.

\begin{proposition}\label{one to one}
There is a one-to-one correspondence between metabolizers of $(L^*/L,\lambda)$ and unimodular integral lattices $U$ with $L \subset U \subset L^*$, given by the assignment $U(M) \defeq \pi^{-1}(M)$, for each metabolizer $M$.
\end{proposition}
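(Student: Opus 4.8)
The plan is to produce an explicit two‑sided inverse to the assignment $M \mapsto U(M) = \pi^{-1}(M)$, namely $U \mapsto \pi(U) = U/L$, and to reduce the whole statement to two routine verifications: that $\pi^{-1}(M)$ really is a unimodular integral lattice lying between $L$ and $L^*$, and that $\pi(U)$ really is a metabolizer. Granting these, bijectivity is automatic: $\pi \co L^* \to L^*/L$ is surjective with kernel $L$, so $\pi(\pi^{-1}(M)) = M$ always holds, while $\pi^{-1}(\pi(U)) = U + L = U$ because $L \subseteq U$; hence the two maps are mutually inverse.

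For well-definedness, set $U = \pi^{-1}(M)$. The containments $L = \pi^{-1}(0) \subseteq U \subseteq L^*$ are immediate, and $U$ is a full-rank free abelian group since $[U:L] = |M| < \infty$. To see $U$ is integral, take $x,y \in U$; then $\pi(x),\pi(y) \in M$, so $-Q(x,y) \equiv \lambda(\pi(x),\pi(y)) = 0 \pmod 1$ because $\lambda$ vanishes on $M$, whence $Q(x,y) \in \Z$. For unimodularity, note that integrality gives $U \subseteq U^*$, while the inclusion $U \subseteq L^*$ gives $U^* \subseteq L^*$, so we have a chain $L \subseteq U \subseteq U^* \subseteq L^*$. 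I would then invoke the standard index-duality identity $[B:A] = [A^*:B^*]$ for full-rank sublattices $A \subseteq B$ (legitimate here since $Q$ is nondegenerate, so all duals have full rank): applied to $L \subseteq U$ it gives $[L^*:U^*] = [U:L] = |M|$, so
\[
|L^*/L| \;=\; [L^*:U^*]\,[U^*:U]\,[U:L] \;=\; |M|^2\,[U^*:U].
\]
Since $M$ is a metabolizer, $|L^*/L| = |M|^2$, forcing $[U^*:U] = 1$, i.e.\ $U$ is unimodular.

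For the reverse assignment, let $U$ be an arbitrary unimodular integral lattice with $L \subseteq U \subseteq L^*$ and put $M = \pi(U) = U/L$, a subgroup of $L^*/L$. Exactly as above, integrality of $U$ forces $\lambda$ to vanish on $M$ (lift elements of $M$ into $U$ and use $Q(\cdot,\cdot) \in \Z$). Unimodularity means $U = U^*$, so the index-duality identity applied to $L \subseteq U$ gives $[L^*:U] = [L^*:U^*] = [U:L]$, whence $|L^*/L| = [L^*:U]\,[U:L] = [U:L]^2 = |M|^2$. Thus $M$ is a metabolizer, and by the second paragraph $\pi^{-1}(M) = U$, so this really is the claimed inverse.

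I do not anticipate a genuine obstacle; the only delicate points are keeping the chain $L \subseteq U \subseteq U^* \subseteq L^*$ straight and the index-duality lemma $[B:A] = [A^*:B^*]$, which is a short Smith-normal-form / change-of-basis computation. (If one preferred a basis-free route, one could instead record that $\lambda$ is nondegenerate, so that $|M|\cdot|M^\perp| = |L^*/L|$ for every subgroup $M$ and $M$ is a metabolizer exactly when $M = M^\perp$; but the index count above is the quickest path.)
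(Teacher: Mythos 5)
Your proposal is correct and follows essentially the same strategy as the paper: both arguments reduce everything to the factorization $[L^*:L]=[L^*:U^*][U^*:U][U:L]$ together with the duality of indices $[U:L]=[L^*:U^*]$, and both characterize integrality of $U$ by the vanishing of $\lambda$ on $\pi(U)$. The one substantive difference is how that index-duality is justified: the paper isolates it as Lemma \ref{indices} and proves it intrinsically, using nondegeneracy of the discriminant form to identify $(L^*/L)/H^\circ$ with $H=\pi(L')$ via the annihilator $H^\circ$, whereas you invoke the general identity $[B:A]=[A^*:B^*]$ for full-rank sublattices and point to a Smith-normal-form computation (the inclusion matrix of $A\subseteq B$ dualizes to its transpose, so the indices agree). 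Both are legitimate; your route is more elementary and works for any nondegenerate rational form, while the paper's stays inside the discriminant-form formalism it needs anyway for the rest of the argument. You also spell out the two-sided inverse $U\mapsto\pi(U)$ explicitly, which the paper compresses into the opening sentence of its proof. One small slip worth fixing: the containment $U^*\subseteq L^*$ follows from $L\subseteq U$ (dualization reverses inclusions), not from $U\subseteq L^*$ as you wrote --- the latter only gives $L\subseteq U^*$; the chain $L\subseteq U\subseteq U^*\subseteq L^*$ you use is nevertheless correct.
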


\begin{proof}
The map $\pi$ induces a bijection between subgroups of $L^*$ containing $L$ and subgroups of $L^*/L$.
For such a subgroup $U \subset L^*$ containing $L$, 
the rational valued form $Q$ on $L^*$ restricts to an integral form on $U$ if and only if $\lambda$ vanishes on $\pi(U) \times \pi(U)$. 
To see this, recall that for $\bar{x},\bar{y} \in U$, $Q(\bar{x},\bar{y}) \equiv -\lambda(\pi(\bar{x}),\pi(\bar{y})) \pmod{1}$.
Finally, $U$ is unimodular if and only if $U=U^*$, or equivalently, if $[U^*:U]=1$.
Now $$\textrm{disc}(L)=[L^*:L]=[L^*:U^*][U^*:U][U:L],$$ and since $[U:L]=[L^*:U^*]$ by Lemma \ref{indices} below, we have $$\textrm{disc}(L)=[U^*:U]([U:L])^2=[U^*:U]|\pi(U)|^2.$$
Hence $U$ is unimodular if and only if $|\pi(U)|^2 = \textrm{disc}(L)$.

\end{proof}

\begin{lemma}\label{indices}
Let $L'$ be an integral lattice with $L \subset L' \subset L^*$. Then $[L':L]=[L^*:(L')^*]$.
\end{lemma}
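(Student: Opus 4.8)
The plan is to deduce the lemma from two standard facts about lattices: multiplicativity of the index in a chain of finite-index sublattices, and the identity $[L^*:L]=\textrm{disc}(L)$ valid for any full-rank integral lattice. First I would set up the relevant chain. Since $Q$ is nondegenerate and $L\subseteq L'\subseteq L^*$, the lattice $L'$ is again full rank $n$, so $(L')^*$ is defined and also has rank $n$; it is integral, so $L'\subseteq (L')^*$, and since $L\subseteq L'$ any vector that pairs integrally with all of $L'$ pairs integrally with all of $L$, giving $(L')^*\subseteq L^*$. Hence we have a chain of finite-index inclusions
\[ L\ \subseteq\ L'\ \subseteq\ (L')^*\ \subseteq\ L^*, \]
all indices being finite because $[L^*:L]=\textrm{disc}(L)<\infty$.

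Next I would establish the determinant identity $\textrm{disc}(L)=\textrm{disc}(L')\cdot[L':L]^2$. Fix a basis $\{e_i\}$ of $L$ with Gram matrix $G$ and a basis $\{f_i\}$ of $L'$ with Gram matrix $G'$, and write $f_i=\sum_j A_{ij}e_j$ with $A\in GL_n(\Q)$. Because $L\subseteq L'$, the inverse $A^{-1}$ is an integer matrix with $|\det A^{-1}|=[L':L]$, so $|\det A|=[L':L]^{-1}$. Then $G'=AGA^{\top}$, hence $|\det G'|=|\det A|^2\,|\det G|$, which says exactly that $\textrm{disc}(L')=\textrm{disc}(L)\cdot[L':L]^{-2}$ — here using that the discriminant of a full-rank integral lattice is the absolute value of the determinant of any of its Gram matrices, and in particular that $[L^*:L]=\textrm{disc}(L)$ via the dual basis.

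Finally, multiplicativity of the index along the chain above gives
\[ \textrm{disc}(L)=[L^*:L]=[L^*:(L')^*]\cdot[(L')^*:L']\cdot[L':L]=[L^*:(L')^*]\cdot\textrm{disc}(L')\cdot[L':L], \]
and substituting $\textrm{disc}(L)=\textrm{disc}(L')\cdot[L':L]^2$ and cancelling yields $[L^*:(L')^*]=[L':L]$, as claimed.

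I do not anticipate a genuine obstacle; the only points requiring care are the sign and absolute-value bookkeeping in the Gram-determinant computation and checking that every index in sight is finite (which is immediate from nondegeneracy, since everything sits between $L$ and $L^*$). If one prefers to sidestep determinants, there is a slightly more homological route: the perfect pairing $L^*\times L\to\Z$ identifies $L^*$ with $\textrm{Hom}(L,\Z)$, under which $(L')^*$ corresponds to the subgroup of functionals on $L$ that extend over $L'$, i.e.\ to the image of $\textrm{Hom}(L',\Z)$ under restriction; the cohomology sequence of $0\to L\to L'\to L'/L\to 0$ then gives $\textrm{Hom}(L,\Z)/\textrm{Hom}(L',\Z)\cong\textrm{Ext}^1(L'/L,\Z)$, a finite group of order $|L'/L|=[L':L]$, and the lemma follows.
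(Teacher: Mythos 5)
Your proof is correct, but it takes a genuinely different route from the paper's. The paper works entirely inside the discriminant group: it sets $H=\pi(L')$, observes that $\pi((L')^*)$ is the annihilator $H^\circ$ of $H$ under the discriminant form $\lambda$, and then uses the nondegeneracy of $\lambda$ (citing Wall) to identify $(L^*/L)/H^\circ\cong H$, from which $[L':L]=|H|=|(L^*/L)/H^\circ|=[L^*:(L')^*]$. You instead run the index chain $L\subseteq L'\subseteq(L')^*\subseteq L^*$, prove the Gram-determinant identity $\textrm{disc}(L)=\textrm{disc}(L')\cdot[L':L]^2$, and cancel against $[L^*:L]=[L^*:(L')^*]\cdot[(L')^*:L']\cdot[L':L]$; all the individual steps (integrality of $A^{-1}$, $|\det A^{-1}|=[L':L]$, $G'=AGA^{\top}$, $[L^*:L]=|\det G|$) check out, and the argument is not circular with Proposition \ref{one to one} even though that proposition uses the same three-term index factorization. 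What each approach buys: the paper's argument yields slightly more structural information (it identifies $\pi((L')^*)$ as the annihilator of $\pi(L')$, which fits the discriminant-form framework used throughout the paper) at the cost of invoking nondegeneracy of $\lambda$; yours is more elementary and self-contained, needing only linear algebra over $\Q$. Your alternative homological sketch — identifying $L^*\cong\textrm{Hom}(L,\Z)$ and $(L')^*$ with the image of the restriction $\textrm{Hom}(L',\Z)\to\textrm{Hom}(L,\Z)$, so that $L^*/(L')^*\cong\textrm{Ext}^1(L'/L,\Z)\cong L'/L$ — is also valid and is arguably the cleanest of the three.
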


\begin{proof}
Let $H = \pi(L')$, so $H \cong L'/L$.
Furthermore let $H^\circ$ denote its \emph{annihilator}, that is, the subgroup of $L^*/L$ consisting of all elements that pair trivially with every element of $H$ under $\lambda$.
Observe that $H^\circ = \pi((L')^*)$. 
We claim that $(L^*/L)/H^\circ \cong H$.
To see this, note that the map $\psi \co L^*/L \rightarrow \textrm{Hom}(L^*/L,\Q/\Z)$ given by $\psi(x) = \lambda(x,\cdot)$ is an isomorphism since $\lambda$ is nondegenerate (see \cite{Wall}, especially Sections 1 and 7).
Indeed, for each $x \in L^*/L$ there exists some $y \in L^*/L$ such that $\lambda(x,y) = 1/n \in \Q/\Z$, where $n$ is the order of $x$.
We get a map $\psi' \co L^*/L \rightarrow \textrm{Hom}(H,\Q/\Z) \cong H$ by restricting the domain of each $\psi(x)$ to $H$.
Then $\psi'$ has image isomorphic to $H$ and kernel $H^\circ$, giving $(L^*/L)/H^\circ \cong H$.
It then follows that
\begin{equation*}
[L':L]=|H|= |(L^*/L)/H^\circ|=[L^*:(L')^*],
\end{equation*}
completing the proof.
\end{proof}

We introduce some additional terminology.
A \emph{characteristic covector} $\chi \in L^*$ is an element such that $Q(\chi, y)\equiv Q(y,y) \pmod{2}$ for all $y \in L$.
Let Char$(L)$ denote the set of characteristic covectors. 
If a characteristic covector $\chi$ actually lies in $L$ (as will always be the case when $L$ is unimodular), we can simply call $\chi$ a characteristic \emph{vector}.
As in the introduction, if $L$ is unimodular and positive definite, we have a well-defined \emph{lattice correction term}
\begin{equation*}
d_L = \min_{\chi \in \textrm{Char}(L)} \Big\{\frac{\chi^2-\textrm{rk}(L)}{4}\Big\}.
\end{equation*}
(\cite{Greene2} contains an extended discussion of this invariant.)
In this language we can state the result of Elkies as follows.

\begin{theorem}[\cite{Elkies}]\label{Elkies}
For $L$ a unimodular positive definite lattice, $d_L \leq 0$ and $d_L = 0$ if and only if $L$ is isomorphic to the standard lattice.
\end{theorem}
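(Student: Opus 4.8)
The statement unwinds to two claims about $m(L)\defeq\min_{\chi\in\mathrm{Char}(L)}\chi^2$: that $m(L)\le n$ (equivalently $d_L\le 0$), and that $m(L)=n$ forces $L\cong(\Z^n,I)$. I would prove both simultaneously by induction on the rank $n$, the engine being a splitting principle together with one genuinely hard lemma of Elkies. Throughout I use that for a unimodular lattice every characteristic covector in fact lies in $L$, so the minimizer $w$ lives in $L$, and that $d$ is additive over orthogonal direct sums: since the characteristic vectors of $L_1\perp L_2$ are exactly the sums $\chi_1+\chi_2$ with $\chi_i\in\mathrm{Char}(L_i)$, and both norms and ranks add, one gets $d_{L_1\perp L_2}=d_{L_1}+d_{L_2}$, with $d_{(\Z,I)}=0$.

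The splitting principle is the following. If $L$ contains a vector $e$ with $e^2=1$, then $\langle e\rangle$ is unimodular and splits off orthogonally, $L\cong\langle e\rangle\perp e^\perp$, with $e^\perp$ unimodular positive definite of rank $n-1$. By additivity $d_L=d_{e^\perp}$, so the inductive hypothesis gives $d_L\le 0$, and $d_L=0$ implies $d_{e^\perp}=0$, hence $e^\perp\cong(\Z^{n-1},I)$ and therefore $L\cong(\Z^n,I)$. Thus the whole theorem reduces to the case where $L$ has no vector of norm $1$, and there I must show the strict inequality $m(L)<n$, so that $d_L<0$ and the equality case simply cannot occur (consistent with $L\not\cong\Z^n$, which does have norm-$1$ vectors).

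To attack the reduced case I would fix a characteristic vector $w$ of minimal norm. Applying minimality to the competing characteristic vectors $w-2v$ (these are characteristic since $w+2L=\mathrm{Char}(L)$) gives $(w-2v)^2\ge w^2$ for every $v\in L$, i.e. $w\cdot v\le v^2$; replacing $v$ by $-v$ yields the two-sided bound $|w\cdot v|\le v^2$ for all $v\in L$. Equivalently $(v-w/2)^2\ge w^2/4$ for all $v$, so $w/2$ is a deep hole of $L$ whose nearest lattice points are indexed by the \emph{tight set} $A=\{v\in L : w\cdot v=v^2\}$; note $v\in A$ precisely when $w-2v$ is again a minimal characteristic vector. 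Van der Blij's congruence $\chi^2\equiv n\pmod 8$ constrains the admissible values of $w^2$ and is useful here. The core of Elkies' argument is to show that the linear constraints $|w\cdot v|\le v^2$ together with $m(L)\ge n$ force $A$ to contain a vector of norm $1$ (and, when $m(L)=n$, a full orthonormal frame), contradicting the no-norm-$1$ hypothesis; this is exactly where the model case $L=\Z^n$, whose tight set is $\{0,1\}^n$, gets pinned down.

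I expect this last extraction to be the main obstacle. All the surrounding apparatus, namely the existence of a minimal characteristic vector, the orthogonal splitting off a norm-$1$ vector, additivity of $d$, and van der Blij's congruence, is routine bookkeeping; whereas producing an honest norm-$1$ vector out of the combinatorics of the tight set $A$ under the inequalities $|w\cdot v|\le v^2$ is the genuinely delicate step, and is Elkies' actual contribution in \cite{Elkies}.
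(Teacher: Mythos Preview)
The paper does not prove this theorem at all: it is stated with a citation to \cite{Elkies} and then used as a black box in the proof of Theorem~\ref{lattice theorem} and in inequality~(\ref{big guy}). So there is no ``paper's own proof'' to compare against; what you have written is a sketch of Elkies' result itself.

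Your reduction is sound. Splitting off a norm-$1$ vector, additivity of $d$ over orthogonal sums, the observation that $\mathrm{Char}(L)=w+2L$ so that minimality of $w$ gives $|w\cdot v|\le v^2$, and van der Blij's congruence are all correct and standard. The issue is the remaining core step, which you yourself flag as the main obstacle. You describe it as a combinatorial extraction: from the constraints $|w\cdot v|\le v^2$ and $w^2\ge n$ one should be able to locate a norm-$1$ vector in the tight set $A$. That is \emph{not} how Elkies' argument runs, and I am not aware of any proof along those purely combinatorial lines. Elkies' actual method is analytic: he forms the theta series of $L$ and of its shadow $\tfrac{1}{2}\mathrm{Char}(L)$, exploits their transformation behavior under the modular group, and compares against the theta series of $\Z^n$. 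The inequality $m(L)\le n$ (and the rigidity at equality) drops out of positivity of certain modular-form coefficients, not from any finite combinatorics of $A$.

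So as written your proposal has a genuine gap: the decisive step is deferred to ``Elkies' contribution'' but with an inaccurate description of what that contribution is. If you want a self-contained proof you would need to import the theta-series machinery; if instead you only intend to cite Elkies (as the paper does), then the inductive scaffolding you built is unnecessary, since Elkies' theorem already handles all ranks at once.
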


Hence the lattice correction term completely determines when a unimodular positive definite lattice is isomorphic to the standard lattice.
We can combine Theorem \ref{Elkies} and Proposition \ref{one to one} to characterize which nonunimodular positive definite lattices embed in the standard lattice.
Let $L$ be such a lattice, and $M < L^*/L$ be a metabolizer.
Since $L$ is positive definite, $U(M)$ is positive definite as well, since $L \subset U(M)$ and both lattices have the same rank.
Hence we can define a set $D \defeq \{d_{U(M_i)}\}$ of lattice correction terms, where we range over all metabolizers $M_i < L^*/L$.
Recall that Theorem \ref{lattice theorem} from the introduction states that $L$ embeds in the standard lattice of the same rank if and only if $D$ contain 0.
We prove this theorem now. 

\begin{proof}[Proof of Theorem \ref{lattice theorem}]
If $D$ contains 0, then some $U(M)$ satisfies $d_{U(M)}=0$.
By Theorem \ref{Elkies}, $U(M)$ is isomorphic to the standard lattice.
Since $L \subset U(M)$, one direction of the proof is finished.

In the other direction, suppose $L$ embeds in the standard lattice $(\Z^n,I)$ of the same rank.
Hence we can suppose $L \subset \Z^n$, and tensoring with $\Q$ shows that $(\Z^n,I) \subset L^* \subset L \otimes \Q$.
By Proposition \ref{one to one}, $(\Z^n,I) = U(M)$ for some metabolizer $M$, and Theorem \ref{Elkies} implies that $d_{U(M)} = 0$. This completes the other direction of the proof.
\end{proof}

Let $n$ denote the rank of $L$ (and $U(M)$).
Recall that $d_{U(M)}$ is defined as
\begin{equation}\label{lattice correction term 2}
d_{U(M)} = \min_{\chi \in \textrm{Char}(U(M))} \Big\{\frac{\chi^2-n}{4}\Big\}.
\end{equation}

Since $L \subset U(M) \subset L^*$, $\textrm{Char}(U(M)) \subset \textrm{Char}(L)$. 
Indeed, $\textrm{Char}(U(M))$ is a subset of those characteristic covectors of $L^*$ that map to elements of $M$ under the projection $\pi$.
Hence from (\ref{lattice correction term 2}) we obtain
\begin{equation}\label{lattice correction term 3}
d_{U(M)} \geq \min_{\substack{\chi \in \textrm{Char}(L)\\ \pi(\chi) \in M}} \Big\{\frac{\chi^2-n}{4}\Big\}.
\end{equation}
This will be useful in the next section.
Note that it is possible to show we have equality in (\ref{lattice correction term 3}) if disc$(L)$ is odd.
\end{section}

\begin{section}{Rational homology spheres and correction terms}\label{QHS}

We now turn to the topological application discussed in the introduction.
Let $Y$ be a rational homology 3--sphere that bounds a smooth positive definite 4--manifold $X$ with $H_1(X)=0$. 
By the long exact sequence of the pair $(X,Y)$ we get the presentation
\begin{equation}\label{ses}
0 \rightarrow H_2(X) \rightarrow H_2(X,Y) \rightarrow H_1(Y) \rightarrow 0.
\end{equation}
Under suitable choices of bases, the map $H_2(X) \rightarrow H_2(X,Y)$ is given by the matrix representing the intersection form $Q_X$ (see, for example, \cite[Exercise 5.3.13 (f)]{GS}).
Furthermore, if we let $L$ denote the lattice $(H_2(X),Q_X)$, the dual lattice $L^*$ is identified with $(H_2(X,Y),Q_X^{-1})$, and (\ref{ses}) becomes
\begin{equation}
0 \rightarrow L \xrightarrow{Q_X} L^* \xrightarrow{\pi} L^*/L \rightarrow 0.
\end{equation}
In this context the discriminant form is called the \emph{linking pairing} $\lambda$ on $H_1(Y) \cong L^*/L$, and is defined by $\lambda(x,y) = -(Q_X)^{-1}(\pi^{-1}(x),\pi^{-1}(y)) \pmod{1}$.
Note that this is independent of the choice of the 4-manifold $X$.

As explained in the introduction, a consequence of Donaldson's theorem is that if $Y$ smoothly bounds a rational homology ball, then the lattice $L = (H_2(X),Q_X)$ embeds in the standard lattice of the same rank.
By Theorem \ref{lattice theorem}, this condition is completely determined by the collection of lattice correction terms $\{d_{U(M_i)}\}$, where we range over metabolizers of $(H_1(Y),\lambda)$.
These lattice correction terms are in turn bounded by the Heegaard Floer correction terms of $Y$, as we now describe.
Recall that in Ozsv{\'a}th and Szab{\'o}'s Heegaard Floer homology, correction terms are rational valued invariants of spin$^c$ rational homology spheres that are preserved under spin$^c$ rational homology cobordism.
For $Y$ with spin$^c$ structure $\t$, the corresponding correction term is denoted $d(Y,\t)$.
We have the following important inequality.

\begin{theorem}[\cite{OSz}]
Let $Y$ be a rational homology sphere that bounds a positive definite 4--manifold $X$. 
If $\s$ is a spin$^c$ structure on $X$ with $\s|_Y = \t$, then
\begin{equation}\label{bound}
\frac{1}{4}(c_1(\s)^2- \textup{rk}(H_2(X))) \geq d(Y,\t).
\end{equation}
\end{theorem}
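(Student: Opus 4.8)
This is part of the framework relating correction terms to four--dimensional cobordism maps developed by Ozsv\'ath and Szab\'o in \cite{OSz}, and the plan is to recall that argument. Throughout we use the standing assumption of this section that $H_1(X)=0$. The first step is to reduce to a negative definite filling by reversing orientation: $\overline{X}\defeq -X$ is negative definite, has $H_1(\overline{X})=0$ and $\partial\overline{X}=-Y$, and satisfies $\mathrm{rk}(H_2(\overline{X}))=\mathrm{rk}(H_2(X))$; a spin$^c$ structure $\s$ on $X$ determines one, $\overline{\s}$, on $\overline{X}$, with the same first Chern class but with $c_1(\overline{\s})^2=-c_1(\s)^2$, since the intersection form changes sign, and $d(-Y,\overline{\s}|_{-Y})=-d(Y,\t)$. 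Under these identifications \eqref{bound} is equivalent to $d(-Y,\overline{\s}|_{-Y})\ge \tfrac14\big(c_1(\overline{\s})^2+\mathrm{rk}(H_2(\overline{X}))\big)$, so it suffices to prove the following statement: if a rational homology sphere $Y'$ bounds a negative definite $W'$ with $H_1(W')=0$, then $d(Y',\s'|_{Y'})\ge \tfrac14\big(c_1(\s')^2+\mathrm{rk}(H_2(W'))\big)$ for every spin$^c$ structure $\s'$ on $W'$.

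To establish this I would puncture $W'$: removing an open ball produces a cobordism $W\colon S^3\to Y'$ with $b_1(W)=b_2^+(W)=0$; writing $n=\mathrm{rk}(H_2(W'))$, one has $\chi(W)=n$ and $\sigma(W)=-n$. This cobordism induces maps on $HF^{\infty}$, $HF^{+}$, and $\HFhat$ that commute with the natural maps among these theories and that shift the absolute $\Q$--grading by
\[
\frac{c_1(\s')^2-2\chi(W)-3\sigma(W)}{4}=\frac{c_1(\s')^2+n}{4}.
\]
The essential input is that, because $b_1(W)=b_2^+(W)=0$, the induced map $HF^{\infty}(S^3)\to HF^{\infty}(Y',\s'|_{Y'})$ is an isomorphism of $\Z[U,U^{-1}]$--modules. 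Granting this, let $y\in HF^{+}(Y',\s'|_{Y'})$ be a lowest--graded element lying in the image of the natural map $HF^{\infty}(Y',\s'|_{Y'})\to HF^{+}(Y',\s'|_{Y'})$; by definition of the correction term, $\mathrm{gr}(y)=d(Y',\s'|_{Y'})$. Lifting $y$ to a homogeneous class in $HF^{\infty}(Y',\s'|_{Y'})$, applying the inverse of the $HF^{\infty}$ cobordism map, and projecting to $HF^{+}(S^3)$, one obtains --- from commutativity of the two cobordism maps with the projections --- a class $x\in HF^{+}(S^3)$ mapping to $y$ under the $HF^{+}$ cobordism map; in particular $x\ne 0$, and $x$ lies in the image of $HF^{\infty}(S^3)\to HF^{+}(S^3)$, which is supported in gradings $\ge 0$ because $d(S^3)=0$. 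Hence $\mathrm{gr}(x)\ge 0$, and comparing gradings through the shift formula gives
\[
d(Y',\s'|_{Y'})=\mathrm{gr}(y)=\mathrm{gr}(x)+\frac{c_1(\s')^2+n}{4}\ \ge\ \frac{c_1(\s')^2+n}{4},
\]
which is exactly what was needed. Unravelling the orientation reduction then yields \eqref{bound}.

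The main obstacle is really just the two facts imported from \cite{OSz}: the absolute grading--shift formula for cobordism maps, and --- where essentially all the force of the definiteness hypothesis resides --- the statement that the cobordism map on $HF^{\infty}$ is an isomorphism precisely when $b_1(W)=b_2^+(W)=0$, which in turn rests on the $\Z[U]$--module structure of $HF^{\infty}$ of a rational homology sphere together with the composition and blow--up formulas for cobordism maps. Once those are in hand, everything else is routine bookkeeping with Euler characteristics, signatures, and the location of the image of $HF^{\infty}$ inside $HF^{+}$.
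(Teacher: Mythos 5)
This theorem is not proved in the paper at all: it is imported verbatim from Ozsv\'ath--Szab\'o \cite{OSz} (it is their Theorem~9.6, stated there for negative definite fillings), so there is no internal proof to compare yours against. Your sketch is an accurate reconstruction of their argument: the orientation reversal to reduce to a negative definite filling, the computation $\chi(W)=n$, $\sigma(W)=-n$ giving the degree shift $\tfrac14\big(c_1(\s')^2+n\big)$, and the transport of the bottom-most element of $\mathrm{Im}\big(HF^{\infty}\to HF^{+}\big)$ through the $HF^{\infty}$ isomorphism are all exactly the right steps, and the two facts you flag as imported (the grading-shift formula and the criterion for the $HF^{\infty}$ cobordism map to be an isomorphism) are indeed where the content lies. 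The one point to fix is your appeal to a ``standing assumption'' $H_1(X)=0$: the theorem as stated carries no such hypothesis (and Ozsv\'ath--Szab\'o do not assume it), so in general you must first surger out loops generating $H_1(X)$ --- this changes neither $\partial X$, nor the intersection form, nor $\mathrm{rk}(H_2(X))$, and reduces to the case $b_1(W)=0$ where your argument applies verbatim. For the application made in this paper the assumption is harmless, since Section~\ref{QHS} only ever invokes the inequality for fillings with $H_1(X)=0$.
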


\begin{figure}
\begin{tikzcd}[row sep=scriptsize, column sep=scriptsize]
  & 0 & 0\\
  \textrm{Spin}^c(Y) 
   \arrow{r}{c_1}  
  & H^2(Y)
 	\arrow{u}
     	\arrow{r}{\textrm{PD}}  
  & H_1(Y) 
  	 \arrow[swap]{u}\\
  \textrm{Spin}^c(X)
          \arrow{u}
          \arrow{r}{c_1}
& H^2(X)
	\arrow{u}
	\arrow{r}{\textrm{PD}} 
& H_2(X,Y)
	\arrow{u}\\
& H^2(X,Y)
	\arrow{r}{\textrm{PD}} 
	\arrow{u}
& H_2(X) 
	\arrow{u}
 \end{tikzcd}
 \caption{A commutative diagram.}
 \label{diagram1}
\end{figure}

Now we relate this to lattices.
The first Chern class mapping and Poincar{\'e} duality provide a bijection between spin$^c$ structures on $X$ and characteristic covectors in $H_2(X,Y)$ (\cite[Proposition 2.4.16]{GS}).
Under this bijection, a spin$^c$ structure $\s$ on $X$ that extends a spin$^c$ structure $\t$ on $Y$ corresponds to a characteristic covector $\chi$ in $H_2(X,Y) = L^*$, such that $\pi(\chi) = \textrm{PD}(c_1(\t))$. 
(See Figure \ref{diagram1}.)
Then (\ref{bound}) implies that $\frac{1}{4}(\chi^2 -\textrm{rk}(H_2(X))) \geq d(Y,\t)$ for each such $\chi$. (Other applications of these bounds can be found in \cite{OwensStrle-bounds} and \cite{GW}.)
For a metabolizer $M$, we can combine this with the inequality (\ref{lattice correction term 3}) and Theorem \ref{Elkies} 
to obtain
\begin{equation}\label{big guy}
0 \geq d_{U(M)} \geq \min_{\substack{\chi \in \textrm{Char}(L) \\ \pi(\chi) \in M}} \Big\{\frac{\chi^2-\textrm{rk}(H_2(X))}{4}\Big\} \geq \min_{\substack{\t  \in \textrm{Spin}^c(Y)\\ \textrm{PD}(c_1(\t))\in M}} \Big\{d(Y,\t)\Big\}.
\end{equation}
Note that we would have a contradiction if there exists a metabolizer $M$ for $H_1(Y)$ with
\begin{equation*}
\min_{\substack{\t  \in \textrm{Spin}^c(Y)\\ \textrm{PD}(c_1(\t))\in M}} \Big\{d(Y,\t)\Big\} > 0,
\end{equation*}
and so such a $Y$ cannot bound a smooth positive definite 4--manifold $X$ with $H_1(X)=0$.
We record this here as a proposition.
Note that this generalizes a theorem for integral homology spheres \cite[Corollary 9.8]{OSz}, and for rational homology spheres there are similar results by Owens and Strle \cite{OwensStrle-char} (see Theorem 2 and Proposition 5.2).

\begin{proposition}\label{bounds definite}
Suppose a rational homology sphere $Y$ has a metabolizer $M$ for  $(H_1(Y),\lambda)$ for which $d(Y,\t)>0$ for each spin$^c$ structure $\t$ with $\textrm{PD}(c_1(\t))\in M$.
Then $Y$ cannot bound a smooth positive definite 4--manifold $X$ with $H_1(X)=0$.
\end{proposition}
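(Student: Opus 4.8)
The plan is to argue by contradiction, reassembling the chain of inequalities that was in effect already put together in the paragraph preceding the statement. Suppose $Y$ does bound a smooth positive definite 4--manifold $X$ with $H_1(X)=0$, and set $L\defeq(H_2(X),Q_X)$, a positive definite integral lattice of some rank $n$. As recalled above, the exact sequence \eqref{ses} together with the identifications $H_2(X,Y)\cong L^*$ and $H_1(Y)\cong L^*/L$ exhibits $(H_1(Y),\lambda)$ as the discriminant form of $L$, so the fixed metabolizer $M$ of $(H_1(Y),\lambda)$ is a metabolizer of $(L^*/L,\lambda)$ in the sense of Section \ref{lattices}.

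By Proposition \ref{one to one}, $M$ determines a unimodular integral lattice $U(M)=\pi^{-1}(M)$ with $L\subset U(M)\subset L^*$; since $L$ is positive definite and $U(M)$ has the same rank $n$, $U(M)$ is positive definite, and Theorem \ref{Elkies} gives $d_{U(M)}\le 0$. For the matching lower bound I would run the computation of \eqref{big guy}. By definition \eqref{lattice correction term 2}, $d_{U(M)}=\min\{(\chi^2-n)/4\}$ over $\chi\in\textrm{Char}(U(M))$, and each such $\chi$ lies in $\textrm{Char}(L)$ with $\pi(\chi)\in M$. Using the bijection between $\Spinc(X)$ and characteristic covectors in $H_2(X,Y)=L^*$ from \cite{GS}, write $\chi=\textrm{PD}(c_1(\s))$ for $\s\in\Spinc(X)$ and put $\t\defeq\s|_Y$, so that $\textrm{PD}(c_1(\t))=\pi(\chi)\in M$ by the commutative diagram in Figure \ref{diagram1}; the Ozsv\'ath--Szab\'o inequality \eqref{bound} applied to $\s$ then gives $(\chi^2-n)/4\ge d(Y,\t)$. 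Minimizing over $\chi$ yields
\[
0 \ \ge\ d_{U(M)} \ \ge\ \min_{\substack{\t\in\Spinc(Y)\\ \textrm{PD}(c_1(\t))\in M}} \big\{d(Y,\t)\big\},
\]
where the minimum runs over a finite, nonempty set: nonempty because $\textrm{Char}(U(M))\neq\emptyset$ produces at least one qualifying $\t$, and finite because $H_1(Y)$ (hence $M$) and $\Spinc(Y)$ are finite. Thus the minimum is attained, and by hypothesis it is strictly positive, contradicting $0\ge d_{U(M)}$.

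The steps that need care — and the only real obstacle — are bookkeeping identifications rather than estimates: checking that the discriminant form of $(H_2(X),Q_X)$ coincides with the linking form of $Y$, so that "metabolizer of $H_1(Y)$" and "metabolizer of $L^*/L$" mean the same thing; and checking, via Figure \ref{diagram1}, that $\pi(\textrm{PD}(c_1(\s)))=\textrm{PD}(c_1(\s|_Y))$, so that the characteristic covectors of $U(M)$ feed precisely the spin$^c$ structures $\t$ with $\textrm{PD}(c_1(\t))\in M$. Granting these, the proposition is just the concatenation of Proposition \ref{one to one}, Theorem \ref{Elkies}, the bijection of \cite{GS}, and inequality \eqref{bound} — indeed it is exactly the contradiction already noted in the discussion of \eqref{big guy} — and no new ingredient is required.
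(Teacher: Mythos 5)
Your proof is correct and follows essentially the same route as the paper, which derives the proposition directly from the chain of inequalities (\ref{big guy}) assembled in the preceding paragraph (Elkies' bound $d_{U(M)}\le 0$, the reduction to characteristic covectors mapping into $M$, and the Ozsv\'ath--Szab\'o inequality (\ref{bound})). The identifications you flag as needing care are exactly the ones the paper sets up before stating the proposition, so no new ingredient is missing.
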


We can now prove the second theorem from the introduction.

\begin{proof}[Proof of Theorem \ref{correction terms theorem}]
Recall we are assuming that $Y$ is a rational homology 3--sphere that bounds a positive definite 4--manifold $X$ with $H_1(X)=0$, and that there exists a metabolizer $M$ of $H_1(Y)$ such that $d(Y,\t) \geq 0$ for all spin$^c$ structures $\t$ with $\textrm{PD}(c_1(\t)) \in M$.
By Proposition \ref{bounds definite}, there must be at least one such $\t$ such that $d(Y,\t) = 0$, and hence
\begin{equation*}
\min_{\substack{\t  \in \textrm{Spin}^c(Y)\\ \textrm{PD}(c_1(\t))\in M}} \Big\{d(Y,\t)\Big\} = 0.
\end{equation*}
Then equation (\ref{big guy}) implies that $d_{U(M)} = 0$, and by Theorem \ref{lattice theorem} the lattice $(H_2(X),Q_X)$ must embed in the standard lattice of the same rank.
\end{proof}
\end{section}

\begin{section}{Examples}

Finally we give a couple of examples to illustrate these ideas.
First we consider $(+9)$-surgery on the left-handed trefoil, $S^3_9(T_{-2,3})$.
Let $Y$ denote this 3--manifold.
The correction terms of surgeries on torus knots can be computed readily by combining work of \cite{NW} and \cite{BL} (see, for example, \cite{AG}).
Then for the unique metabolizer of $H_1(Y)$, one can check that the corresponding correction terms are $\{2,0,0\}$.
Since one of these is nonzero, the correction term obstruction shows that $S^3_9(T_{-2,3})$ does not bound a rational homology ball.
On the other hand, Theorem \ref{correction terms theorem} states that for every positive definite 4--manifold $X$ with $H_1(X)=0$, the lattice $(H_2(X),Q_X)$ must embed in the standard lattice of the same rank.
Indeed it easy to check this condition for the two obvious positive definite 4--manifolds bounded by $Y$: the 2-handlebody given by the trace of the surgery, and the canonical definite plumbing associated to $S^3_9(T_{-2,3})$ as a Seifert fibered space.
However, $Y$ also bounds a \emph{negative} definite 4--manifold with trivial first homology (see \cite{OwensStrle-torus}), with intersection form 
\[
\begin{bsmallmatrix}
 -1 & 0 & 0 & 0 & 0 & 0 & 0 & 0 & 0  \\
 0 & -2 & 1 & 0 & 0 & 0 & 0 & 0 & 0 \\
 0 & 1 & -2 & 1 & 0 & 0 & 0 & 0 & 0  \\
 0 & 0 & 1 & -2 & 1 & 0 & 0 & 0 & 0  \\
 0 & 0 & 0 & 1 & -2 & 1 & 0 & 0 & 0  \\
 0 & 0 & 0 & 0 & 1 & -2 & 1 & 0 & 0  \\
 0 & 0 & 0 & 0 & 0 & 1 & -2 & 1 & 0  \\
 0 & 0 & 0 & 0 & 0 & 0 & 1 & -2 & 1  \\
 0 & 0 & 0 & 0 & 0 & 0 & 0 & 1 & -2  \\
\end{bsmallmatrix}.
\]
Since the corresponding lattice does not embed in the standard negative definite lattice of rank 9, we see that the obstruction coming from Donaldson's theorem can also be used to show that $Y$ does not bound a rational homology ball.
This suggests the following question.
\begin{question}
Does there exist a rational homology sphere $Y$ for which the correction term obstruction does not vanish, but for any positive \emph{or negative} definite 4--manifold bounded by $Y$, the associated lattice must embed in the standard lattice of the same rank?
\end{question}

Next we use Proposition \ref{bounds definite} to show that the connected sum of the Poincar\'e homology sphere $\Sigma(2,3,5)$ (oriented as the boundary of the \emph{negative} $E_8$ plumbing) and the Seifert fibered space $Y(1;\frac{3}{2},\frac{21}{4},\frac{50}{7})$ does not bound a positive definite 4-manifold with trivial first homology. We label this manifold $Z \defeq \Sigma(2,3,5) \# Y(1;\frac{3}{2},\frac{21}{4},\frac{50}{7})$.
Note that we chose this example because the similar obstructions of Owens and Strle mentioned above (\cite{OwensStrle-char} Theorem 2 and Proposition 5.2) do not apply to $Z$.
The correction terms for each of these manifolds can be computed algorithmically using results of \cite{OSz-plumbed}. 
Using the fact that correction terms add over connected sums, we compute that the correction terms for $Z$ are $\{2,-\frac{4}{9},\frac{2}{9},2,\frac{8}{9},\frac{8}{9},2,\frac{2}{9},-\frac{4}{9}\}$, where this set is identified with $H_1(Z) \cong \Z/9\Z$ in the obvious way.
Hence the correction terms corresponding to the metabolizer are $\{2,2,2\}$, and so Proposition \ref{bounds definite} implies that $Z$ cannot bound a positive definite 4-manifold with trivial first homology.
We do not know if $Z$ bounds a negative definite 4-manifold.

\end{section}


\bibliographystyle{amsalpha}
\bibliography{lattices.bib}

\end{document}